\documentclass[11pt, reqno]{amsart}

\title{On a question of Ruzsa about densities of sumsets}

\usepackage[T1]{fontenc}
\usepackage{amsmath}
\usepackage{amssymb}
\usepackage{amsthm}
\usepackage[left=3.5cm, right=3.5cm, paperheight=11.8in]{geometry}
\usepackage{hyperref}
\usepackage{fancyhdr}
\usepackage{enumitem}
\usepackage{comment}
\usepackage{nicefrac}
\usepackage{mathrsfs}
\usepackage{bm}
\usepackage{graphicx}
\usepackage[utf8]{inputenc}
\usepackage{cancel}
\usepackage{mathtools}

\AtBeginDocument{%
   \def\MR#1{}
}

\newtheorem{thm}{Theorem}[section]
\newtheorem{cor}[thm]{Corollary}
\newtheorem{lem}[thm]{Lemma}
\newtheorem{prop}[thm]{Proposition}

\theoremstyle{definition}
\newtheorem{defi}[thm]{Definition}
\let\olddefi\defi
\renewcommand{\defi}{\olddefi\normalfont}
\newtheorem{example}[thm]{Example}
\let\oldexample\example
\renewcommand{\example}{\oldexample\normalfont}
\newtheorem{rmk}[thm]{Remark}
\let\oldrmk\rmk
\renewcommand{\rmk}{\oldrmk\normalfont}

\newtheorem{question}[thm]{Question}
\let\oldquestion\question
\renewcommand{\question}{\oldquestion\normalfont}

\keywords{Sumsets; asymptotic density; upper and lower asymptotic densities; probabilistic method.}

\subjclass[2020]{Primary 11B13; Secondary 11B05, 11B30.}

\hypersetup{
    pdftitle={},
    pdfauthor={Paolo Leonetti},
    pdfmenubar=false,
    pdffitwindow=true,
    pdfstartview=FitH,
    colorlinks=true,
    linkcolor=blue,
    citecolor=green,
    urlcolor=cyan
}

\newcommand{\ve}{\varepsilon}
\renewcommand{\ast}{\star}

\author[P.~Leonetti]{Paolo Leonetti}
\address{
Universit\'{a} degli Studi dell’Insubria\\ via Monte Generoso 71 \\ Varese 21100\\ Italy}
\email{leonetti.paolo@gmail.com}
\urladdr{\url{https://sites.google.com/site/leonettipaolo}}

\begin{document}

\begin{abstract}
Let $\mathsf{d}$, $\mathsf{d}^{\ast}$, and $\mathsf{d}_{\ast}$ denote, respectively, the asymptotic density, the upper asymptotic density, and the lower asymptotic density on $\mathbb N$. We show that there are no  constants $0<\nu<1$ and $c>0$ such that
$$
\mathsf{d}_{\ast}(A+A)\ge c\bigl(\mathsf{d}^{\ast}(A+A)\bigr)^{1-\nu}\mathsf{d}(A)^{\nu}
$$
for every set $A\subseteq \mathbb{N}$ which admits asymptotic density. 
This answers in the negative a question of I.Z. Ruzsa. 
\end{abstract}
\maketitle
\thispagestyle{empty}

\section{Introduction}

For each subset $A\subseteq\mathbb N:=\{0,1,2,\ldots\}$, denote its lower asymptotic density and upper asymptotic density, respectively, by 
$$
\mathsf{d}_{\ast}(A):=\liminf_{N\to\infty}\frac{|A\cap[0,N)|}{N}
\quad \text{ and }\quad 
\mathsf{d}^{\ast}(A):=\limsup_{N\to\infty}\frac{|A\cap[0,N)|}{N}.
$$
If $\mathsf{d}_{\ast}(A)=\mathsf{d}^{\ast}(A)$, then we say that $A$ admits asymptotic density, and we denote the common value by $\mathsf{d}(A)$. 
Let $\mathcal{D}$ be the family of sets $A\subseteq \mathbb{N}$ which admit asymptotic density. 
The sumset of two sets $A,B \subseteq \mathbb{N}$ is denoted by $$A+B:=\{x+y:x\in A,\ y\in B\}.$$ 
Many relationships between the [upper or lower] asymptotic density of $A$ and its sumset $A+A$ have been obtained in the literature, see e.g. \cite{MR4986634, MR4080054, MR2167721, MR4474053, MR2244800, MR2813407, MR56632, MR4439850, MR90618}.

Hegyv{\'a}ri, Hennecart, and Pach mentioned in \cite[Section 2]{MR3939472} a question of I.Z. Ruzsa: 
\begin{question}\label{q:ruzsa}
    Do there exist constants $0<\nu<1$ and $c>0$ such that
\begin{equation}\label{eq:claimedRuzsa}
\mathsf{d}_{\ast}(A+A)\ge c\bigl(\mathsf{d}^{\ast}(A+A)\bigr)^{1-\nu}\mathsf{d}(A)^{\nu}
\end{equation}
for all $A \in \mathcal{D}$?
\end{question}
They also added that Ruzsa proved (in an unpublished manuscript) that, in case of an affirmative answer, we necessarily
have $\nu\ge \nicefrac{1}{2}$.

In \cite[Proposition~2.2]{MR3939472}, the authors 
constructed a set $A\in \mathcal{D}$ with
$$
\mathsf d(A)=\frac37,
\qquad
\mathsf d_{\ast}(A+A)=\frac67,
\quad \text{ and }\quad 
\mathsf d^{\ast}(A+A)=1,
$$
thereby showing that $A \in \mathcal{D}$ does not necessarily imply $A+A \in \mathcal{D}$. 
Question \ref{q:ruzsa} asks whether, despite such oscillation, the lower asymptotic density of $A+A$ must still be quantitatively controlled by the density of $A$ together with the upper asymptotic density of $A+A$. 
It is remarkable that by Kneser's theorem \cite{MR56632} the  inequality 
$
\mathsf d_{\ast}(A+A)
<
2\mathsf d_{\ast}(A)
$ 
implies that $A+A$ is eventually periodic (hence $A+A \in \mathcal{D}$). It follows that 
$$
\mathsf d_{\ast}(A+A)\ge \min\{2\mathsf{d}(A), \mathsf{d}^{\ast}(A+A)\}
$$
for all $A \in \mathcal{D}$. 

\begin{thm}\label{thm:construction}
Fix an integer $r\ge2$ and a real number $\delta\in(0,\nicefrac{1}{4})$. Then there exists a set $A\in \mathcal{D}$ such that
\begin{equation}\label{eq:mainclaim}
\mathsf{d}(A)=\delta^r, \quad 
\mathsf{d}^{\ast}(A+A)=1,
\quad \text{ and }\quad 
\mathsf{d}_{\ast}(A+A)
\le
(r+1)(2\delta)^{r-1}.
\end{equation}
\end{thm}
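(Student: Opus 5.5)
The plan is to build $A$ block by block along a rapidly increasing sequence of checkpoints $N_1<L_1<N_2<L_2<\cdots$, with $L_k/N_k\to\infty$ and $N_{k+1}/L_k\to\infty$ very fast. On every block, $A$ will have local density $\delta^r$ up to $o(1)$ errors, so that $A\in\mathcal D$ with $\mathsf d(A)=\delta^r$. Each stage uses two kinds of pieces, each serving one requirement of \eqref{eq:mainclaim}.

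\emph{Random windows (for $\mathsf d^\ast(A+A)=1$).} On $[N_k,L_k)$ we let $A$ be a random subset in which each integer is chosen independently with probability $\delta^r$. For $z\in[2N_k+K,2L_k)$ with $K$ large, the number of disjoint pairs $\{x,z-x\}$ inside the window is of order $\min(z-2N_k,2L_k-z)$. Hence $\Pr(z\notin A+A)\le(1-\delta^{2r})^{cK}$. By Chebyshev or Chernoff and Borel--Cantelli, almost surely $A+A$ covers all but a $o(1)$ proportion of $[2N_k,2L_k)$ for every $k$, and $A\cap[N_k,L_k)$ has $(\delta^r+o(1))(L_k-N_k)$ elements. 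Since $L_k/N_k\to\infty$, we get $|(A+A)\cap[0,2L_k)|/(2L_k)\to1$, that is, $\mathsf d^\ast(A+A)=1$. We fix one such realization.

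\emph{Structured blocks (for the small lower density).} On $[L_k,N_{k+1})$ we choose moduli $L_k\ll M_1\mid M_2\mid\cdots\mid M_r\ll N_{k+1}$, each much larger than the previous one. We put
\[
A\cap[L_k,N_{k+1})=B_k:=\{x\in[L_k,N_{k+1}):\ x \bmod M_j\in[0,\delta M_j)\ \text{for } j=1,\dots,r\}.
\]
Because the scales separate, this set is equidistributed with density $\delta^r+o(1)$ on every initial segment of length $\gg M_r$. To handle initial segments of length comparable to some $M_j$, we instead let the block start and end at multiples of $M_r$, and check that the running count never deviates by more than a $o(1)$ proportion. This check fixes how fast $M_j$ must grow relative to $L_k$, and it gives $\mathsf d(A)=\delta^r$.

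\emph{Estimate at the checkpoint $N_{k+1}$.} Every element of $A$ below $L_k$ is less than $L_k\ll M_1$. We split $(A+A)\cap[0,N_{k+1})$ into three parts:
\begin{itemize}
\item $[0,2L_k)$, which has negligible density;
\item $B_k+[0,L_k)$, which lies in the set where $x\bmod M_j\in[0,\delta M_j+L_k)$ for all $j$, up to a negligible set of boundary residues, so its density is about $\delta^r$;
\item $B_k+B_k$.
\end{itemize}
For $x,y\in B_k$ we analyse $(x+y)\bmod M_j$ via carries. Writing $x\bmod M_{j}=a_j$ with $a_j\in[0,\delta M_j)$, we have $(x+y)\bmod M_j=a_j+b_j\in[0,2\delta M_j)$, because $2\delta<1$ means no wraparound. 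So $x+y$ lies in the set $\{z: z\bmod M_j\in[0,2\delta M_j)\ \forall j\}$, which has density $(2\delta)^r$.

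The main obstacle is making this containment rigorous uniformly, including the interaction of the nested moduli $M_j\mid M_{j+1}$ and the endpoints of the blocks. The lower residue determines part of the higher one, so the conditions are not independent across levels. If the clean computation fails, I will settle for the cruder bound. For each $z\in B_k+B_k$, at most one level $j$ can be a boundary or carry exception, and there are $r+1$ choices: one of the $r$ levels, or none. Outside that level the $r-1$ remaining conditions $z\bmod M_i\in[0,2\delta M_i)$ hold, giving density at most $(r+1)(2\delta)^{r-1}$.

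Adding the three parts, we get $|(A+A)\cap[0,N_{k+1})|/N_{k+1}\le(r+1)(2\delta)^{r-1}+o(1)$ (and in fact $(2\delta)^r+\delta^r+o(1)$ if the clean carry analysis works, which is smaller). Letting $k\to\infty$ gives $\mathsf d_\ast(A+A)\le(r+1)(2\delta)^{r-1}$.
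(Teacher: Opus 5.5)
Your random windows are fine (an acceptable substitute for the paper's periodized random basis $H_m\cdot\mathbb N+R_m$). The gap is in the structured block, and it is exactly the ``endpoint'' issue you postponed.

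\textbf{The two requirements on $M_1$ conflict.} Your bound for $B_k+(A\cap[0,L_k))$ needs $M_1\gg L_k$. But $\mathsf d(A)=\delta^r$ needs $B_k$ to be equidistributed on every initial segment $[L_k,N)$. These are incompatible:
\begin{enumerate}
\item For $L_k\le x<M_1$ you have $x\bmod M_j=x$ for all $j$, so $B_k\cap[L_k,M_1)=[L_k,\delta M_1)\cap\mathbb N$.
\item At $N\approx\delta M_1$ the ratio $|A\cap[0,N)|/N$ is therefore $1-O(L_k/M_1)$, and $B_k$ is then empty on $[\delta M_1,M_1)$.
\end{enumerate}
No growth condition on the $M_j$ repairs this. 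A set cut out by congruence conditions at scales $M_1,\dots,M_r\gg L_k$ cannot have density $\delta^r$ on initial segments at scales between $L_k$ and $M_r$.

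\textbf{Starting at a multiple of $M_r$ only relocates the problem.} The gap $[L_k,tM_r)$ must then be filled at density $\delta^r$, and its elements are no longer below $L_k$. Suppose the filler $F$ is equidistributed at scales below $M_1$, which is the natural choice near $L_k$. Then its gaps are much shorter than the intervals $[sM_r,sM_r+\delta M_1)$ that $B_k$ contains in every period. Hence $F+B_k$ already covers almost all of $[tM_r+L_k,N_{k+1})$.

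\textbf{Your fallback does not address this.} $B_k+B_k$ needs no fallback: since $2\delta<1$ there are no carries, and the containment in $\{z: z\bmod M_j<2\delta M_j\ \forall j\}$ is exact. The claim that ``at most one level can be an exception'' has no mechanism behind it in your construction. Note also that your clean count would give $\mathsf d_\ast(A+A)\le\delta^r+(2\delta)^r=O(\mathsf d(A))$, far stronger than the theorem. That should have signalled that something was being overlooked.

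\textbf{The missing idea is the paper's gradual change of moduli.} Between $y_m$ and $x_{m+1}$ the paper uses $r+1$ consecutive pieces $C_{m,0},\dots,C_{m,r}$.
\begin{enumerate}
\item $C_{m,0}$ imposes $\delta$-interval conditions modulo small primes $p_{m,1},\dots,p_{m,r}$, with period $\pi_m\le\varepsilon_m y_m$.
\item $C_{m,j}$ replaces $p_{m,1},\dots,p_{m,j}$ by large primes $q_{m,1},\dots,q_{m,j}$. Here $\delta q_{m,j}\ge 2z_{m,j-1}$, and the piece starts at $z_{m,j}\ge\varepsilon_m^{-1}\pi_m q_{m,1}\cdots q_{m,j}$.
\end{enumerate}
So every piece has period at most $\varepsilon_m$ times its left endpoint, which yields $\mathsf d(A)=\delta^r$. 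Moreover, every integer below $z_{m,i-1}$ automatically has residue in $W_{q_{m,i}}$ modulo $q_{m,i}$. Hence an element of $C_{m,k}$ plus an element of $C_{m,r}$ satisfies the $2\delta$-conditions modulo every $q_{m,i}$ with $i\ne k+1$. Sums involving elements below $y_m$, and sums within $C_{m,r}$, satisfy all $r$ conditions.

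Thus only one condition is lost per cross term. The relevant sums lie in $r+1$ sets, each of density about $(2\delta)^{r-1}$. This is precisely where the bound $(r+1)(2\delta)^{r-1}$ comes from, and it is the structure your construction lacks.
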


In the proof of Theorem \ref{thm:construction}, the set $A$ will be obtained by concatenating suitable long periodic pieces. Some pieces will make the sumset $A+A$ almost covering infinitely many large intervals, while other pieces will have the opposite behaviour keeping a sufficiently small lower asymptotic density. 

As an application of Theorem \ref{thm:construction}, we obtain a negative answer to Question \ref{q:ruzsa}: 
\begin{cor}\label{cor:ruzsa}
    There are no constants $0<\nu<1$ and $c>0$ such that Inequality \eqref{eq:claimedRuzsa} holds for all $A\in \mathcal{D}$.
\end{cor}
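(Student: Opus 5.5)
We derive Corollary~\ref{cor:ruzsa} from Theorem~\ref{thm:construction} by contradiction. Suppose there were constants $0<\nu<1$ and $c>0$ such that \eqref{eq:claimedRuzsa} holds for every $A\in\mathcal{D}$. Fix $\delta\in(0,\nicefrac14)$ and an integer $r\ge2$, both to be chosen later. Let $A$ be the set given by Theorem~\ref{thm:construction}. Since $\mathsf{d}^{\ast}(A+A)=1$, the factor $(\mathsf{d}^{\ast}(A+A))^{1-\nu}$ equals $1$. Substituting the three quantities in \eqref{eq:mainclaim} into \eqref{eq:claimedRuzsa} therefore yields
$$
(r+1)(2\delta)^{r-1}\ \ge\ \mathsf{d}_{\ast}(A+A)\ \ge\ c\,\delta^{r\nu}.
$$

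The plan is to make the left-hand side decay in $r$ strictly faster than the right-hand side. Rewrite the inequality as
$$
c\ \le\ (r+1)\,2^{r-1}\,\delta^{\,r(1-\nu)-1}.
$$
First fix $\delta$ small enough, depending only on $\nu$, that $2\delta^{1-\nu}\le\nicefrac12$; for instance take $\delta<\min\{\nicefrac14,\,4^{-1/(1-\nu)}\}$. Then
$$
(r+1)\,2^{r-1}\,\delta^{\,r(1-\nu)-1}=\frac{r+1}{2\delta}\,\bigl(2\delta^{1-\nu}\bigr)^{r}\ \le\ \frac{r+1}{2\delta}\,2^{-r}.
$$
With $\delta$ fixed, this tends to $0$ as $r\to\infty$. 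Choosing $r$ large enough that it drops below $c$ gives a contradiction.

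The only point needing care is the order of quantifiers: $\delta$ must be chosen from $\nu$ alone, before $r$, so that the exponent gap $r(1-\nu)$ beats the $2^{r}$ growth. This works because $\nu<1$ strictly. Beyond that the argument is a one-line computation, and all the real difficulty lies in Theorem~\ref{thm:construction} itself.
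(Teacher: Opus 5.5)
Your proof is correct and is essentially the paper's argument: substitute the set from Theorem~\ref{thm:construction} into \eqref{eq:claimedRuzsa} and compare $(r+1)(2\delta)^{r-1}$ with $c\,\delta^{r\nu}$. The only difference is the order of choices. The paper first fixes $r$ with $r-1-r\nu>0$ and then takes $\delta$ small, whereas you fix $\delta$ with $2\delta^{1-\nu}\le\nicefrac12$ and then let $r\to\infty$; since both orders work, your remark that $\delta$ \emph{must} be chosen before $r$ is overstated.
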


To state our last result, for each $x,y> 0$ and $q \in \mathbb{R}$ define the $q$-mean 
$$
\mathscr{M}_q(x,y):=\left(\frac{x^q+y^q}{2}\right)^{1/q},
$$
with the convention that $\mathscr{M}_0(x,y):=\sqrt{xy}$. It is well known that $(\mathscr{M}_q(x,y): q \in \mathbb{R})$ is an increasing net, with infimum $\min\{x,y\}$ and supremum $\max\{x,y\}$. As a consequence of Corollary \ref{thm:construction} (choosing $\nu=\nicefrac{1}{2}$), for each $c>0$ the inequality 
$
\mathsf{d}_\star(A+A) \ge c \mathscr{M}_0(\mathsf{d}(A),\mathsf{d}^\star(A+A)) 
$ 
cannot hold for all $A \in \mathcal{D}$. As we show below, the index $q=0$ is, in a sense, optimal: 
\begin{prop}\label{prop:optimality}
    Fix a real $q<0$ and define $c_q:=\min\{1,2^{1+1/q}\}$. Then 
    $$
    \mathsf{d}_\star(A+A) \ge c_q \mathscr{M}_q(\mathsf{d}(A),\mathsf{d}^\star(A+A))
    $$
    for all $A \in \mathcal{D}$ with $\mathsf{d}(A)>0$. 
\end{prop}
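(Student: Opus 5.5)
The starting point is the consequence of Kneser's theorem recorded in the introduction: for every $A\in\mathcal{D}$,
$$
\mathsf d_{\ast}(A+A)\ge \min\{2\mathsf{d}(A), \mathsf{d}^{\ast}(A+A)\}.
$$
Write $x:=\mathsf d(A)>0$ and $y:=\mathsf d^\ast(A+A)$. Since $A\subseteq A+A-\min A$ up to a shift, we have $y\ge x>0$, so $\mathscr M_q(x,y)$ is well defined. The task thus reduces to the purely real-variable inequality
$$
\min\{2x,y\}\ge c_q\,\mathscr M_q(x,y)\qquad\text{for all } 0<x\le y,\ q<0,
$$
and I would prove it by splitting into the two cases of the minimum.

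\emph{Case $y\le 2x$.} Here the minimum is $y$. Because $\mathscr M_q(x,y)\le\max\{x,y\}=y$ and $c_q\le 1$, we get $c_q\mathscr M_q(x,y)\le y$ immediately.

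\emph{Case $y>2x$.} Here the minimum is $2x$. Since $q<0$, the map $t\mapsto t^{1/q}$ is decreasing, and $y^q\ge 0$. Hence
$$
\mathscr M_q(x,y)=\Bigl(\tfrac{x^q+y^q}{2}\Bigr)^{1/q}\le \Bigl(\tfrac{x^q}{2}\Bigr)^{1/q}=2^{-1/q}x .
$$
Therefore $c_q\mathscr M_q(x,y)\le 2^{1+1/q}\,2^{-1/q}x=2x$, using $c_q\le 2^{1+1/q}$. This gives the bound in this case as well.

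The only step needing care is getting the direction of the inequality right when raising to the negative power $1/q$; this is exactly why $q<0$ is required, since for $q\ge0$ dropping $y^q$ goes the wrong way. I would also check that the constant $c_q$ is precisely what makes both cases close: $c_q\le1$ is used in the first case and $c_q\le 2^{1+1/q}$ in the second.
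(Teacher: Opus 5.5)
Your proof is correct and is essentially the paper's argument: Kneser's theorem, used through the inequality $\mathsf{d}_{\ast}(A+A)\ge\min\{2\mathsf{d}(A),\mathsf{d}^{\ast}(A+A)\}$, followed by a case split on whether $\mathsf{d}^{\ast}(A+A)\le 2\mathsf{d}(A)$, with the bound $\mathscr{M}_q(x,\cdot)\le 2^{-1/q}x$ for $q<0$ in the second case. The differences are cosmetic: the paper first splits on whether $\mathsf{d}_{\ast}(A+A)<2\mathsf{d}(A)$, and in the subcase $\mathsf{d}^{\ast}(A+A)\le 2\mathsf{d}(A)$ it passes through $\mathscr{M}_q(2x,\cdot)$, whereas your direct use of $\mathscr{M}_q\le\max$ is slightly cleaner.
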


The proofs of 
our results 
will be given in Section \ref{sec:proofs}.

\section{Preliminary results}\label{sec:preliminaries}

The long pieces on which the sumset $A+A$ is almost covering will be obtained from the following elementary application of the probabilistic method. 

\begin{lem}\label{lem:randombasis11111}
Let $\alpha\in(0,1)$ and $\ve>0$. For every sufficiently large integer $H$, there exists a set $R\subseteq\mathbb Z/H\mathbb Z$ such that
$$
\left|\frac{|R|}{H}-\alpha\right|<\ve
\quad \text{ and }\quad 
R+R=\mathbb Z/H\mathbb Z.
$$
\end{lem}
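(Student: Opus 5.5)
We use a random set with independent membership probability $p$ and control the two requirements separately: the first moment handles $R+R=\mathbb Z/H\mathbb Z$, and Chebyshev's inequality handles the size. Since enlarging $R$ only helps $R+R$, we may assume $\ve<\alpha$ and $\ve<1-\alpha$. Set $p:=\alpha$ and let $R\subseteq\mathbb Z/H\mathbb Z$ contain each residue independently with probability $p$.

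\textbf{Size.} We have $|R|\sim\mathrm{Bin}(H,p)$. By Chebyshev,
$$
\Pr\bigl(\bigl||R|/H-\alpha\bigr|\ge\ve\bigr)\le \frac{p(1-p)}{H\ve^2}\to 0 .
$$

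\textbf{Covering.} Fix $x\in\mathbb Z/H\mathbb Z$. Consider the unordered pairs $\{y,x-y\}$ with $y\neq x-y$. They are pairwise disjoint as sets of residues, and there are at least $(H-2)/2$ of them, because $2y=x$ has at most two solutions modulo $H$. The events $\{y,x-y\}\subseteq R$ are independent and each has probability $p^2$. Hence
$$
\Pr(x\notin R+R)\le (1-p^2)^{(H-2)/2}\le e^{-p^2(H-2)/2}.
$$
Summing over the $H$ choices of $x$, the probability that $R+R\neq\mathbb Z/H\mathbb Z$ is at most $H e^{-p^2(H-2)/2}$, which tends to $0$.

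\textbf{Conclusion.} For all sufficiently large $H$, the two failure probabilities sum to less than $1$. So some outcome $R$ satisfies both conditions at once, which proves the lemma.

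There is no real obstacle here. The only point needing care is that the pairs $\{y,x-y\}$ are disjoint, which is exactly what makes the events independent; excluding the at most two degenerate $y$ with $2y=x$ takes care of this.
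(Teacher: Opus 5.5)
Your proposal is correct and follows the paper's proof essentially verbatim. Both take a random subset with membership probability $\alpha$, bound the size via Chebyshev (the paper calls it Markov applied to $(X-\mathbb{E}X)^2$), use the disjoint pairs $\{y,x-y\}$ to get $\Pr(x\notin R+R)\le(1-\alpha^2)^{(H-2)/2}$, and finish with a union bound over $x$. Your opening reduction to $\ve<\min\{\alpha,1-\alpha\}$ is never used. It is valid only because a smaller $\ve$ gives a stronger conclusion, not because enlarging $R$ helps $R+R$.
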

\begin{proof}
Choose a random subset $R\subseteq\mathbb Z/H\mathbb Z$ by putting each residue into $R$ independently with probability $\alpha$. Define $X:=|R|$, so that 
$$
\mathbb EX=\alpha H
\quad \text{ and }\quad 
\mathbb E\bigl((X-\mathbb EX)^2\bigr)=H\alpha(1-\alpha).
$$
By applying Markov's inequality, we obtain that 
$$
\mathrm{Pr}\bigl(|X-\alpha H|\ge\ve H\bigr)
\le
\frac{\mathbb E((X-\mathbb EX)^2)}{\ve^2H^2}
=
\frac{\alpha(1-\alpha)}{\ve^2H}.
$$
In particular, the probability that the density condition fails tends to $0$ as $H\to\infty$.

Fix now a residue $s\in\mathbb Z/H\mathbb Z$, and consider the map $x\mapsto s-x$ on $\mathbb Z/H\mathbb Z$. Its fixed points are the solutions of $2x=s$. There are at most two such solutions. All other residues are partitioned into disjoint two-element sets of the form $\{x,s-x\}$. 
Consequently, there are at least $(H-2)/2$ pairwise disjoint two-element sets of this form. For each one of them, the probability that both elements belong to $R$ is $\alpha^2$. Since the pairs are disjoint and membership of distinct residues is independent, the corresponding events are independent. Therefore
$$
\mathrm{Pr}(s\notin R+R)
\le
(1-\alpha^2)^{(H-2)/2}
$$
Using also the elementary inequality $1-t\le e^{-t}$ for $t\ge0$, it follows that 
$$
\mathrm{Pr}(R+R\ne\mathbb Z/H\mathbb Z)
\le
\sum_{s \in \mathbb Z/H\mathbb Z}\mathrm{Pr}(s\notin R+R) 
\le 
H\exp\left(-\frac{\alpha^2(H-2)}2\right).
$$
Observe that the right-hand side tends to $0$ as $H\to\infty$. 

Hence, for all sufficiently large $H$, the sum of the probabilities of the two events $|X-\alpha H|\ge\ve H$ and $R+R\ne\mathbb Z/H\mathbb Z$ is smaller than $1$. For such an $H$, there is a realization of $R$ for which neither events occur. 
\end{proof}

As usual, for all $k \in \mathbb{Z}$ and $A\subseteq \mathbb{Z}$, we define $k\cdot A:=\{ka: a \in A\}$ and $A+k:=A+\{k\}$. 
\begin{lem}\label{lem:loudpiece111111}
Pick positive integers $x,y,H$ such that $y-x>2H$. Fix also $R\subseteq\mathbb Z/H\mathbb Z$ such that 
$R+R=\mathbb Z/H\mathbb Z$ and define 
$$
S:=(H\cdot \mathbb{Z}+R) \cap [x,y).
$$
Then $[2x+2H,2y-2H) \cap \mathbb{N}\subseteq S+S$. 
\end{lem}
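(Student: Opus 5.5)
Fix an integer $n\in[2x+2H,2y-2H)$. We will write $n=a+b$ with $a,b\in S$. Since $S$ is just the set of integers in $[x,y)$ whose residue mod $H$ lies in $R$, this amounts to two choices. First pick residues $r_1,r_2\in R$ with $r_1+r_2\equiv n \pmod H$, using the hypothesis $R+R=\mathbb Z/H\mathbb Z$. Then choose integer representatives $a\equiv r_1$ and $b\equiv r_2 \pmod H$ with $a+b=n$ and both $a,b\in[x,y)$.

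For the second step, let $m:=\lfloor n/2\rfloor$, so that $m\in[x+H,y-H)$; indeed $n\ge 2x+2H$ gives $m\ge x+H$, and $n<2y-2H$ gives $m\le n/2<y-H$. Take $a$ to be the unique integer in the window $[m-H+1,m]$ with $a\equiv r_1\pmod H$. Such a window of $H$ consecutive integers contains exactly one representative of each residue class. Set $b:=n-a$. Then $b\equiv n-r_1\equiv r_2\pmod H$, so the residue condition holds automatically.

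It remains to check the range, which is the only point needing care:
\begin{itemize}
\item $a\ge m-H+1\ge x+1> x$ and $a\le m<y$, so $a\in[x,y)$;
\item $b=n-a\ge n-m\ge m\ge x$;
\item $b\le n-m+H-1\le m+1+H-1=m+H<y$, using $n-m=\lceil n/2\rceil\le m+1$ and $m<y-H$.
\end{itemize}
Hence $a,b\in S$ and $n=a+b\in S+S$.

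The margin $2H$ on each side is exactly what absorbs the shift of at most $H$ needed to hit a prescribed residue class. The hypothesis $y-x>2H$ only guarantees that the interval $[2x+2H,2y-2H)$ is nonempty, so it is not otherwise needed. Nothing probabilistic enters here; Lemma \ref{lem:randombasis11111} is used only later to supply $R$.
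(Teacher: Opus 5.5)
Your proof is correct and follows essentially the paper's approach: pick $r_1,r_2\in R$ with $r_1+r_2\equiv n\pmod H$, then find $a\equiv r_1\pmod H$ in a run of $H$ consecutive integers on which both $a$ and $n-a$ lie in $[x,y)$. The paper computes the whole feasible interval $[\max\{x,n-y+1\},\min\{y-1,n-x\}]$ and splits into cases according to whether $n\le x+y-1$, whereas you exhibit the explicit window $[\lfloor n/2\rfloor-H+1,\lfloor n/2\rfloor]$ inside that interval, which avoids the case split.
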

\begin{proof}
Fix an integer $n \in [2x+2H,2y-2H)$. Since $R+R=\mathbb Z/H\mathbb Z$, there exist $r,t\in R$ such that $r+t\equiv n\bmod H$. 
We search for an integer $a\equiv r\bmod H$ such that $x\le a<y$ and $x\le n-a<y$, or, equivalently, 
$$
\max\{x,n-y+1\}\le a\le\min\{y-1,n-x\}.
$$
If $n\le x+y-1$, then this interval is $[x,n-x]$, and it contains 
$n-2x+1\ge 2H+1$ integers. 
If $n> x+y-1$, then this interval is $[n-y+1,y-1]$, and it contains $2y-1-n\ge2H$ integers. 
Thus, in either case, the interval contains at least $H$ consecutive integers. Hence it contains an integer $a$ satisfying $a\equiv r\bmod H$. This completes the proof since $b:=n-a \equiv n-r\equiv t \bmod{H}$ and $a,b \in [x,y)$, which implies $n \in S+S$. 
\end{proof}

Lastly, we will also use the following elementary estimate. 
\begin{lem}\label{lem:productestimate1111}
If $x_1,\ldots,x_r,y_1,\ldots,y_r\in[0,1]$, then
$$
\left|\prod_{i=1}^r x_i-\prod_{i=1}^r y_i\right|
\le
\sum_{i=1}^r|x_i-y_i|.
$$
\end{lem}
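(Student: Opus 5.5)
We prove Lemma~\ref{lem:productestimate1111} by induction on $r$, using a telescoping decomposition.

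For $r=1$ the inequality is an identity. For the inductive step, set $P:=\prod_{i=1}^{r-1}x_i$ and $Q:=\prod_{i=1}^{r-1}y_i$. Both lie in $[0,1]$ because every factor does. Now insert the mixed term $P\,y_r$:
$$
\prod_{i=1}^r x_i-\prod_{i=1}^r y_i = P(x_r-y_r) + y_r(P-Q).
$$
By the triangle inequality,
$$
\left|\prod_{i=1}^r x_i-\prod_{i=1}^r y_i\right| \le |P|\,|x_r-y_r| + |y_r|\,|P-Q|.
$$
Since $|P|\le 1$ and $|y_r|\le 1$, the right-hand side is at most $|x_r-y_r|+|P-Q|$. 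The induction hypothesis gives $|P-Q|\le\sum_{i=1}^{r-1}|x_i-y_i|$, and adding $|x_r-y_r|$ yields the claim.

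Equivalently, one can write the difference directly as the telescoping sum
$$
\sum_{k=1}^r\Bigl(\prod_{i<k}y_i\Bigr)(x_k-y_k)\Bigl(\prod_{i>k}x_i\Bigr)
$$
and bound each bracketed product by $1$.

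There is no real obstacle here. The only point needing care is that the hypothesis that all values lie in $[0,1]$ is exactly what makes the partial products have absolute value at most $1$.
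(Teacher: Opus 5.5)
Your proof is correct and is essentially the paper's argument. The paper writes down the telescoping identity $\prod_i x_i-\prod_i y_i=\sum_j (x_j-y_j)\prod_{i<j}x_i\prod_{i>j}y_i$ directly (the mirror image of your sum) and bounds the side products by $1$, which is exactly what your induction unrolls to.
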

\begin{proof}
Observe the following identity
$$
\prod_{i=1}^r x_i-\prod_{i=1}^r y_i
=
\sum_{j=1}^r
\left((x_j-y_j)\left(\prod_{i<j}x_i\right)
\left(\prod_{i>j}y_i\right)\right).
$$
Since all the factors other than $x_j-y_j$ have absolute value at most $1$, the claim follows by the triangle inequality. 
\end{proof}


\section{Proofs}
\label{sec:proofs}


For the sake of clarity, we divide the proof of Theorem \ref{thm:construction} into several steps. Here, empty unions, intersections, sums, and products are understood
to be equal to $\emptyset$, $\mathbb N$, $0$, and $1$, respectively.
\begin{proof}
    [Proof of Theorem \ref{thm:construction}] 
    Throughout, pick a sequence $(\varepsilon_m)_{m\ge 1}$ with values in $(0,\nicefrac{1}{4})$ such that $\lim_m \varepsilon_m=0$. For each $m\ge 1$, pick also 
    primes $p_{m,1},\ldots,p_{m,r}$ such that  
    $$
    \max\left\{\frac{2}{\delta}, \frac{r}{\varepsilon_m}\right\}< p_{m,1}<p_{m,2}<\cdots<p_{m,r}.
    $$
    and set $\pi_m:=p_{m,1}p_{m,2}\cdots p_{m,r}$. 
    In addition, for each $m\ge 1$, applying Lemma \ref{lem:randombasis11111} with $\alpha=\delta^r$ and error $\varepsilon=\varepsilon_m$, it is possible to choose a sufficiently large integer $H_m$ and a set $R_m\subseteq\mathbb Z/H_m\mathbb Z$ such that
\begin{equation}\label{eq:choiceRm}
\left|\frac{|R_m|}{H_m}-\delta^r\right|<\varepsilon_m
\quad \text{ and }\quad 
R_m+R_m=\mathbb Z/H_m\mathbb Z.
\end{equation}

    \medskip
    
    \textbf{Auxiliary sequences.} We start constructing recursively auxiliary sequences $(x_m)_{m\ge 1}$ and  $(y_m)_{m\ge 1}$ of positive integers, vectors of integers $(z_{m,0},z_{m,1}, \ldots,z_{m,r})$, and vectors of primes $(q_{m,1},q_{m,2},\ldots,q_{m,r})$. We proceed by induction as it follows: 
    \begin{list}{$\bullet$}{}
    \item [(i)] Fix an integer $x_1\ge  H_1/\varepsilon_1$. 
    \item [(ii)] Suppose that $x_m$ has been defined for some $m\ge 1$. Then pick an integer 
    $$
    y_m \ge \frac{1}{\varepsilon_m}{\max \left\{x_m,  \pi_m \right\}}.
    $$
    \item [(iii)] Set $z_{m,0}:=y_m$. 
    Suppose that $z_{m,0},\ldots,z_{m,j-1}$ and $q_{m,1},\ldots,q_{m,j-1}$ have been defined for some $j \in \{1,\ldots,r\}$. Then by Dirichlet theorem on arithmetic progressions, see e.g. \cite{MR568909}, choose a prime $q_{m,j} \ge 2z_{m,j-1}/\delta$ such that 
    $$
    q_{m,j}\equiv 1\bmod{\pi_mq_{m,1}q_{m,2}\cdots q_{m,j-1}},
    $$
    and, then, an integer 
    $$
    z_{m,j} \ge \frac{1}{\varepsilon_m}\max\{z_{m,j-1}, \pi_mq_{m,1}q_{m,2}\cdots q_{m,j}\}.
    $$
    \item [(iv)] Suppose that $z_{m,r}$ has been defined for some $m\ge 1$. Then pick an integer 
    $$
    x_{m+1} \ge \frac{1}{\varepsilon_{m+1}}\max\left\{z_{m,r},H_{m+1}\right\}.
    $$
    \end{list}

\medskip

\textbf{Construction of $A$.} By the above definitions, we have $x_1<y_1<x_2<y_2<\cdots$ and $y_m=z_{m,0}<z_{m,1}<\cdots<z_{m,r}<x_{m+1}$ for all $m\ge 1$. For notational convenience, for each $m\ge 1$, set 
$z_{m,r+1}:=x_{m+1}$. 
In addition, for each $m\ge 1$ and $j \in \{0,1,\ldots,r\}$,  define the sets 
$$
B_m:=(H_m \cdot \mathbb{N}+R_m) \cap [x_m,y_m)
$$
and 
$$
C_{m,j}:=
\left(
\bigcap_{i=1}^j(q_{m,i}\cdot\mathbb N+W_{q_{m,i}})
\cap
\bigcap_{i=j+1}^r(p_{m,i}\cdot\mathbb N+W_{p_{m,i}})
\right)
\cap[z_{m,j},z_{m,j+1}),
$$
where $W_h:=\{0,1,\ldots,\lfloor \delta h\rfloor\}$ for each $h \ge 1$. 
To complete the construction, define 
$$
A:=\bigcup_{m\ge 1}(B_m \cup C_m),
$$
where $C_m:=C_{m,0}\cup C_{m,1}\cup \cdots \cup C_{m,r}$ for all $m\ge 1$. 

\medskip

\textbf{Density of $A$.} 
Define $\alpha:=\delta^r$. We will show that $A$ admits asymptotic density (that is, $A \in \mathcal{D}$) and that
$
\mathsf{d}(A)=\alpha.
$ 

Observe by construction that $A \cap [x_m,y_m)=B_m$ for all $m\ge 1$, hence on such interval $A$ agrees with the periodic set $H_m\cdot\mathbb N+R_m$. Thanks to \eqref{eq:choiceRm} and the definition of the auxiliary sequences, we have 
\begin{equation}\label{eq:observ1}
\left|\frac{|R_m|}{H_m}-\alpha\right|<\varepsilon_m
\quad \text{ and }\quad 
H_m\leq\varepsilon_m x_m.
\end{equation}

\bigskip

Fix now $m\geq1$ and $j\in\{0,1,\ldots,r\}$. Then the set $C_{m,j}$, which is periodic on the interval $[z_{m,j}, z_{m,j+1})$, has period
$$
T_{m,j}:=
\prod_{i=1}^{j}q_{m,i}\,\cdot 
\prod_{i=j+1}^{r}p_{m,i}.
$$
Observe that all the primes $q_{m,1},\ldots,q_{m,j},p_{m,j+1},\ldots,p_{m,r}$ are distinct and greater than $r/\varepsilon_m$. 
It follows from the Chinese remainder theorem that the density $C_{m,j}$ is 
$$
\rho_{m,j}:=
\prod_{i=1}^{j}\frac{|W_{q_{m,i}}|}{q_{m,i}} \cdot 
\prod_{i=j+1}^{r}\frac{|W_{p_{m,i}}|}{p_{m,i}}.
$$
Since $|W_h|=\lfloor\delta h\rfloor+1$ for each $h\ge 1$, we have
$$
0<
\frac{|W_h|}{h}-\delta
=
\frac{\lfloor\delta h\rfloor+1-\delta h}{h}
\leq
\frac{1}{h}.
$$
In particular, for all $h \in \{q_{m,1},\ldots,q_{m,j},p_{m,j+1},\ldots,p_{m,r}\}$, we obtain 
$$
\left|\frac{|W_h|}{h}-\delta\right|
\leq\frac{\varepsilon_m}{r}.
$$
Applying Lemma~\ref{lem:productestimate1111}, we obtain
\begin{equation}\label{eq:observ11}
|\rho_{m,j}-\delta^r|
\leq
\sum_{i=1}^{j}
\left|
\frac{|W_{q_{m,i}}|}{q_{m,i}}-\delta
\right|
+
\sum_{i=j+1}^{r}
\left|
\frac{|W_{p_{m,i}}|}{p_{m,i}}-\delta
\right| \le r\cdot \frac{\varepsilon_m}{r}=\varepsilon_m.
\end{equation}

\medskip

We next estimate the periods $T_{m,j}$. Since $z_{m,0}=y_m$, the choice of $y_m$ gives
\begin{equation}\label{eq:observ2}
T_{m,0}
=
\pi_m
\leq
\varepsilon_m y_m
=
\varepsilon_m z_{m,0}.
\end{equation}
In addition, for every $j\in\{1,\ldots,r\}$, we have
\begin{equation}\label{eq:observ3}
T_{m,j}
=
q_{m,1}\cdots q_{m,j}
p_{m,j+1}\cdots p_{m,r} 
\le \pi_mq_{m,1}\cdots q_{m,j} 
\leq\varepsilon_m z_{m,j}.
\end{equation}

\medskip

To sum up, on every subinterval of $[x_m,x_{m+1})$ in the construction of $A$,
the corresponding periodic sets have density within $\varepsilon_m$ of $\alpha$ and period at most $\varepsilon_m$ times the left endpoint of that interval.

\medskip

We now relabel by $(\xi_\ell)_{\ell\ge 1}$ the increasing sequence 
$$
x_1<y_1<z_{1,1}<\cdots<z_{1,r}<x_2
<y_2<z_{2,1}<\cdots.
$$
It follows by construction that $\xi_{\ell+1}>4\xi_\ell$ for all $\ell\ge 1$. For each $\ell\geq1$, let $\kappa(\ell)$ be the unique positive
integer such that 
$
x_{\kappa(\ell)}
\leq
\xi_\ell
<
x_{\kappa(\ell)+1}.
$ 

\medskip

On every interval $[\xi_\ell,\xi_{\ell+1})$, the set $A$ is a finite union of arithmetic progressions, hence periodic. Let $\tau_\ell$ be the (smallest) period of $A\cap [\xi_\ell,\xi_{\ell+1})$, and denote its density by $\sigma_\ell:=|A\cap [\xi_\ell,\xi_{\ell}+\tau_\ell)|/\tau_\ell$. By the above observations, cf. \eqref{eq:observ1}, \eqref{eq:observ2}, and \eqref{eq:observ3}, we have 
$$
|\sigma_\ell-\alpha|\le \varepsilon_{\kappa(\ell)}
\quad \text{ and }\quad 
\tau_\ell \le \varepsilon_{\kappa(\ell)}\xi_\ell. 
$$
Since $\lim_\ell \kappa(\ell)=\infty$, we have also $\lim_\ell \varepsilon_{\kappa(\ell)}=0$. 

\medskip

At this point, for each integer $N\ge \xi_1$, let $g(N)$ be the unique positive integer such that $\xi_{g(N)} \le N<\xi_{g(N)+1}$. It easily follows that 
\begin{displaymath}
    \begin{split}
        |A\cap [0,N)|&=\sum_{\ell=1}^{g(N)-1}|A \cap [\xi_\ell,\xi_{\ell+1})|+|A \cap [\xi_{g(N)},N)|\\
        &=\sum_{\ell=1}^{g(N)-1}\sigma_\ell(\xi_{\ell+1}-\xi_\ell)+\sigma_{g(N)}(N-\xi_{g(N)})+\Psi_N, 
    \end{split}
\end{displaymath}
for some real $\Psi_N$ such that $|\Psi_N|\le 2\sum_{\ell=1}^{g(N)}\tau_\ell$. Here, we also used that $A\cap [0,\xi_1)=\emptyset$. 
Taking into account \eqref{eq:observ1} and \eqref{eq:observ11} and that $N=\xi_1+\sum_{\ell=1}^{g(N)-1}(\xi_{\ell+1}-\xi_\ell)+(N-\xi_{g(N)})$, we obtain that 
\begin{displaymath}
    \begin{split}
\left||A\cap [0,N)|-\alpha N\right|
&\le \alpha\xi_1+\sum_{\ell=1}^{g(N)-1}|\sigma_\ell-\alpha|(\xi_{\ell+1}-\xi_\ell)+|\sigma_{g(N)}-\alpha|(N-\xi_{g(N)})+|\Psi_N|\\
&\le O(1)+\sum_{\ell=1}^{g(N)-1}\varepsilon_{\kappa(\ell)}(\xi_{\ell+1}-\xi_\ell)+\varepsilon_{\kappa(g(N))}(N-\xi_{g(N)})+2\sum_{\ell=1}^{g(N)}\tau_\ell.
    \end{split}
\end{displaymath}

Fix $\varepsilon>0$ and pick $\ell_0\ge 1$ such that $\varepsilon_{\kappa(\ell)}\le \varepsilon$ for all $\ell\ge \ell_0$. Since $\tau_\ell \le \varepsilon_{\kappa(\ell)} \xi_\ell$ and $\xi_{\ell+1}\ge 2\xi_\ell$ for all $\ell\ge 1$, we get 
$$
\sum_{\ell=\ell_0}^{g(N)}\tau_\ell \le \varepsilon \sum_{\ell=\ell_0}^{g(N)}\xi_\ell 
\le \varepsilon \xi_{g(N)}\sum_{k=0}^\infty 2^{-k}=2\varepsilon \xi_{g(N)}.
$$

Putting everything together, it follows that 
\begin{displaymath}
    \begin{split}
\left||A\cap [0,N)|-\alpha N\right|
&\le O(1)+\sum_{\ell=\ell_0}^{g(N)-1}\varepsilon_{\kappa(\ell)}(\xi_{\ell+1}-\xi_\ell)+\varepsilon_{\kappa(g(N))}(N-\xi_{g(N)})+2\sum_{\ell=\ell_0}^{g(N)}\tau_\ell\\
&\le O(1)+\varepsilon \left( \sum_{\ell=\ell_0}^{g(N)-1}(\xi_{\ell+1}-\xi_\ell)+(N-\xi_{g(N)})\right)+4\varepsilon \xi_{g(N)}\\
&\le O(1)+\varepsilon (N-\xi_{\ell_0})+4\varepsilon N\\
&\le O(1)+5\varepsilon N.
    \end{split}
\end{displaymath}
Since $\varepsilon$ was arbitrary, dividing by $N$ we conclude that $A \in \mathcal{D}$ and $\mathsf{d}(A)=\alpha$. 

\bigskip

\textbf{Upper density of $A+A$.} 
By the definition of $A$, we have that $B_m=(H_m\cdot\mathbb Z+R_m)\cap[x_m,y_m)\subseteq A$. 
By \eqref{eq:observ1} and the choice of $y_m$, we have $H_m\leq\varepsilon_m x_m$ and $x_m\leq\varepsilon_m y_m$. This implies that 
$$
2H_m\le 2\varepsilon_m^2y_m\le \frac{1}{8}y_m.
$$
On the other hand, we have also 
$$
y_m-x_m
\geq
(1-\varepsilon_m)y_m
>
\frac{3}{4}y_m.
$$
Therefore $y_m-x_m>2H_m$. Moreover, by \eqref{eq:choiceRm}, we have $R_m+R_m=\mathbb Z/H_m\mathbb Z$.  
Therefore, Lemma~\ref{lem:loudpiece111111} gives
$$
[2x_m+2H_m,2y_m-2H_m)\cap\mathbb N
\subseteq
B_m+B_m
\subseteq
A+A.
$$
Observe that $x_m/y_m\le \varepsilon_m$ and that $H_m/y_m \le \varepsilon_mx_m/y_m \le \varepsilon_m^2$. 
Hence, for each $m\ge 1$, it follows that 
\begin{displaymath}
    \begin{split}
        \frac{|(A+A)\cap[0,2y_m)|}{2y_m}&\ge \frac{|[2x_m+2H_m,2y_m-2H_m)|}{2y_m}\\
        &=1-\frac{x_m}{y_m}-\frac{2H_m}{y_m}\\
        &\ge 1-\varepsilon_m-2\varepsilon_m^2 \\
        &\ge 1-3\varepsilon_m.
    \end{split}
\end{displaymath}
Since $\lim_m \varepsilon_m=0$, we conclude that $\mathsf{d}^\star(A+A)=1$. 
 
\bigskip

\textbf{Lower density of $A+A$.} 
Fix $m\geq 1$, and write
$$
F_m:=A\cap[0,z_{m,r})
\quad\text{ and }\quad
G_m:=C_{m,r}=A\cap[z_{m,r},x_{m+1}).
$$
Since $A\cap [0,x_{m+1})=F_m\cup G_m$, it follows that 
\begin{equation}\label{eq:inclusion1}
(A+A)\cap[0,x_{m+1})
\subseteq
(F_m+F_m)\cup(F_m+G_m)\cup(G_m+G_m).
\end{equation}
Now, we claim that 
\begin{equation}\label{eq:finallowerdensity}
\frac{|(A+A)\cap[0,x_{m+1})|}{x_{m+1}}
\leq
2\varepsilon_{m+1}
+
(r+1)
\left(
2\delta+\frac{\varepsilon_m}{r}
\right)^{r-1}
+
(r+1)\varepsilon_m\varepsilon_{m+1}.
\end{equation}

\medskip

First, since $F_m\subseteq[0,z_{m,r})$, then 
\begin{equation}\label{eq:inclusion2}
\frac{|(F_m+F_m)|}{x_{m+1}}
\leq
\frac{2z_{m,r}}{x_{m+1}} \le 2\varepsilon_{m+1}.
\end{equation}

\medskip

Second, recall that 
$$
G_m=C_{m,r}=\bigcap_{i=1}^r(q_{m,i}\cdot \mathbb{N}+W_{q_{m,i}}) \cap [z_{m,r}, x_{m+1}),
$$
where $W_{q_{m,i}}=\{0,1,\ldots,\lfloor \delta q_{m,i}\rfloor\}$ for each $i \in \{1,2,\ldots,r\}$. Notice that $2\lfloor \delta q_{m,i}\rfloor\le 2\delta q_{m,i}<q_{m,i}$. For notational convenience, set also 
$$
D_{m,i}:=W_{q_{m,i}}+W_{q_{m,i}}=\{0,1,\ldots,2\lfloor \delta q_{m,i}\rfloor\}
$$
for each $i \in \{1,2,\ldots,r\}$. 
Since $q_{m,i}>r/\varepsilon_m$ and $\varepsilon_m,\delta \in (0,1/4)$ and $r\ge 2$, we get
$$
\frac{|D_{m,i}|}{q_{m,i}}
\leq
\frac{2\delta q_{m,i}+1}{q_{m,i}}
=2\delta+\frac{1}{q_{m,i}}
<
2\delta+\frac{\varepsilon_m}{r}=:\theta_m \in (0,1)
$$
for each $i \in \{1,2,\ldots,r\}$. Setting 
$$
\Gamma_{m}
:=
\bigcap_{i=1}^r(q_{m,i}\cdot \mathbb{N}+D_{m,i})
\quad \text{ and }\quad 
Q_m:=q_{m,1}q_{m,2}\cdots q_{m,r},
$$
it follows that 
\begin{equation}\label{eq:inclusion3}
G_m+G_m\subseteq \Gamma_m
\end{equation}
and that, by the Chinese remainder theorem, 
\begin{equation}\label{eq:inclusion4}
\frac{|\Gamma_m \cap [0,x_{m+1})]|}{x_{m+1}}\le \theta_m^r+\frac{Q_m}{x_{m+1}} \le \theta_m^{r-1}+\varepsilon_m\varepsilon_{m+1}.
\end{equation}
In the last inequality, we also used the facts that $\theta_m \in (0,1)$ and, by construction, 
$$
Q_m\le \pi_mQ_m \le \varepsilon_mz_{m,r}\le \varepsilon_m \varepsilon_{m+1}x_{m+1}.
$$

\medskip

Lastly, suppose that $x+y \in (A+A)\cap [0,x_{m+1})$ with $x \in F_m$ and $y \in G_m$. 
If $x<y_m=z_{m,0}$ then $x \in W_{q_{m,i}}$ for each $i \in \{1,\ldots,r\}$ because 
$$
\max W_{q_{m,i}}=\lfloor \delta q_{m,i}\rfloor \ge 2z_{m,i-1} >z_{m,0}
$$
for each $i \in \{1,\ldots,r\}$. Hence $x+y \in q_{m,i}\cdot \mathbb{N}+D_{m,i}$ for all $i \in \{1,\ldots,r\}$, i.e., $x+y \in \Gamma_m$. 

Otherwise, fix now $k \in \{0,1,\ldots,r-1\}$ and suppose that $x \in C_{m,k}$ and $y \in G_m=C_{m,r}$. Define also 
$$
\Gamma_{m,k}:=\bigcap_{\substack{1\le i\le r,\\ i\neq k+1}}(q_{m,i}\cdot \mathbb{N}+D_{m,i}).
$$

If $i\leq k$, then the definition of $C_{m,k}$ gives $x \in q_{m,i}\cdot \mathbb{N}+W_{q_{m,i}}$. 
Since the same is true for $y$, we obtain $x+y \in q_{m,i}\cdot \mathbb{N}+D_{m,i}$.

If $i\geq k+2$, then
$$
\max W_{q_{m,i}}=\lfloor \delta q_{m,i}\rfloor \geq
2z_{m,i-1}
>z_{m,i-1} \ge z_{m,k+1}>x. 
$$
Thus 
$x \in q_{m,i}\cdot \mathbb{N}+W_{q_{m,i}}$
and again $x+y\in q_{m,i}\cdot \mathbb{N}+D_{m,i}$. 

Therefore, the only congruence condition which may fail for a sum in $C_{m,k}+G_m$ is the condition modulo $q_{m,k+1}$. This proves that 
\begin{equation}\label{eq:inclusion5}
F_m+G_m\subseteq \Gamma_m \cup \bigcup_{j=0}^{r-1}\Gamma_{m,j}.
\end{equation}
Reasoning analogously as before, for each $j \in \{0,1,\ldots,r-1\}$, we have 
\begin{equation}\label{eq:inclusion6}
\frac{|\Gamma_{m,j} \cap [0,x_{m+1})]|}{x_{m+1}}\le \theta_m^{r-1}+\frac{Q_m}{x_{m+1}} \le \theta_m^{r-1}+\varepsilon_m\varepsilon_{m+1}.
\end{equation}

It follows by \eqref{eq:inclusion1}, \eqref{eq:inclusion2}, \eqref{eq:inclusion3}, \eqref{eq:inclusion5}, and the upper estimates \eqref{eq:inclusion4} and \eqref{eq:inclusion6} that 
\begin{displaymath}
    \begin{split}
        \frac{|(A+A)\cap[0,x_{m+1})|}{x_{m+1}}
&\leq
\frac{|F_m+F_m|}{x_{m+1}}+\frac{|\Gamma_m \cap [0,x_{m+1})|}{x_{m+1}}+\sum_{j=0}^{r-1}\frac{|\Gamma_{m,j} \cap [0,x_{m+1})|}{x_{m+1}}\\
&\leq 2\varepsilon_{m+1}+(r+1)\theta_m^{r-1}+(r+1)\varepsilon_m\varepsilon_{m+1}.
    \end{split}
\end{displaymath}
This proves inequality \eqref{eq:finallowerdensity}. Taking the lower limit along the sequence $(x_{m+1})_{m\geq1}$, we conclude that $\mathsf{d}_\star(A+A)\le (r+1)(2\delta)^{r-1}$. 
\end{proof}

\medskip

\begin{proof}
[Proof of Corollary \ref{cor:ruzsa}]
    Fix constants $\nu \in (0,1)$ and $c>0$. 
    Pick a sufficiently large integer $r\ge 2$ such that $(r-1)/r>\nu$. Since $r-1-r\nu>0$, then $x^{r-1-r\nu}\to 0$ as $x\to 0^+$. Hence it is possible to choose $\delta \in (0,\nicefrac{1}{4})$ such that 
    $$
    \delta^{r-1-r\nu}<\frac{c}{(r+1)2^{r-1}}. 
    $$
    Thanks to Theorem \ref{thm:construction}, there exists $A \in \mathcal{D}$ which satisfies \eqref{eq:mainclaim}. It follows that 
    $$
    \mathsf{d}_{\ast}(A+A)
    \le (r+1)2^{r-1}\delta^{r-1}
    < c \cdot 1 \cdot \delta^{r\nu}=c\bigl(\mathsf{d}^{\ast}(A+A)\bigr)^{1-\nu}\mathsf{d}(A)^{\nu}.
    $$
    Therefore Inequality \eqref{eq:claimedRuzsa} fails. 
\end{proof}

\medskip

\begin{proof}
    [Proof of Proposition \ref{prop:optimality}] 
Fix $A \in \mathcal{D}$ with $x:=\mathsf{d}(A)>0$, and define 
$$
y:=\mathsf{d}_{\star}(A+A)
\quad \text{ and }\quad 
z:=\mathsf{d}^{\star}(A+A).
$$
Since $A+A$ contains a translation of $A$, then $0<x\le y\le z$. In particular, $\mathscr{M}_q(x,z)$ is well defined and 
$
x\le \mathscr{M}_q(x,z)\le z.
$

Thanks to Kneser's theorem \cite{MR56632}, if $y<2x$, then $A+A$ is eventually periodic, hence $A+A\in\mathcal{D}$ and $y=z$. In such case,
$$
y=z\ge \mathscr{M}_q(x,z)
\ge c_q\mathscr{M}_q(x,z).
$$

Hence, suppose hereafter that $y\ge 2x$. If $z\le 2x$, then
$$
y\ge 2x
\ge \mathscr{M}_q(2x,z)
\ge \mathscr{M}_q(x,z)
\ge c_q\mathscr{M}_q(x,z).
$$
In the opposite, if $z>2x$, it follows by the hypothesis $q<0$ that
$$
\mathscr{M}_q(x,z)
=
\left(
\frac{x^q+z^q}{2}
\right)^{1/q}
\le
\left(
\frac{x^q}{2}
\right)^{1/q}
=
2^{-1/q}x.
$$
Therefore 
$$
y
\ge 
2x\ge 
2^{1+1/q}\mathscr{M}_q(x,z)
\ge
c_q\mathscr{M}_q(x,z).
$$
This completes the proof.
\end{proof}

\bibliographystyle{amsplain}
\bibliography{densityrefs}

\end{document}